\newtheorem{theorem}{Theorem}[section]
\newtheorem{lemma}[theorem]{Lemma}
\title{A Modified Nonlinear Conjugate Gradient Algorithm for Functions with Non-Lipschitz continuous Gradient}
\author{ \href{https://orcid.org/0000-0003-0294-4294}{\includegraphics[scale=0.06]{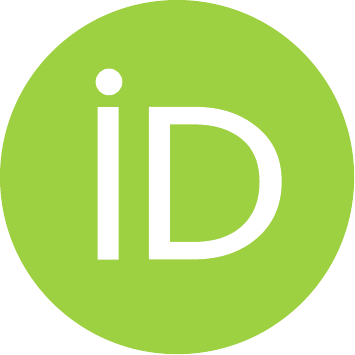}\hspace{1mm}Bingjie Li}\\
	Department of Statistics \& Data Science\\
	National University of Singapore, Singapore\\
	\texttt{bjlistat@nus.edu.sg} \\
	\And
	\href{https://orcid.org/0000-0002-5079-1029}{\includegraphics[scale=0.06]{orcid.pdf}\hspace{1mm}Tianhao Ni} \\
    School of Mathematical Science\\
    Zhejiang University, China\\ 
	\texttt{12035009@zju.edu.cn} \\
	\And
	\href{https://orcid.org/0000-0002-2485-7004}{\includegraphics[scale=0.06]{orcid.pdf}\hspace{1mm}Zhenyue Zhang} \\
	Nanjing Center for Applied Mathematics, China\\
    School of Mathematical Science,  Zhejiang University, China\\
	\texttt{zyzhang@zju.edu.cn} \\
}
\begin{document}
\maketitle

\begin{abstract}
In this paper, we propose a modified nonlinear conjugate gradient (NCG) method for functions with a non-Lipschitz continuous gradient.  First, we present a new formula for the conjugate coefficient $\beta_k$ in NCG,  conducting a search direction that provides an adequate function decrease. We can derive that our NCG algorithm guarantees strongly convergent for continuous differential functions without Lipschitz continuous gradient. Second, we present a simple interpolation approach that could automatically achieve shrinkage, generating a step length satisfying the standard Wolfe conditions in each step.  Our framework considerably broadens the applicability of NCG and preserves the superior numerical performance of the PRP-type methods.
\end{abstract}

\keywords{nonlinear conjugate gradient \and line search \and strongly convergent \and non-Lipschitz gradient}

\section{Introduction}
\label{sec.1}
Consider the unconstrained optimization problem:
\begin{align}
    \min f(x) \quad  x\in \mathbb{R}^n,
\end{align}
where $f(x)$ is continuously differentiable.  The nonlinear conjugate gradient (NCG) method provides an iterative scheme for minimizing $f(x)$ via the two steps: starting at a point $x_0$ and setting $k=0$ and $d_0  = -\nabla f(x_0)$, the NCG updates the current point $x_k$ via a line search along the direction $d_k$ as that
\begin{align}\label{def:update x}
    x_{k+1} = x_k + \alpha_k d_k,
\end{align}
and then updates the direction $d_k$ in a linear combination of the previous direction $d_k$ and the current gradient $g_{k+1}=\nabla f(x_{k+1})$ for next search, 
\begin{align}\label{def:d}
    d_{k+1} = -g_{k+1} +\beta_k d_k.
\end{align}
In each iteration step,  the step length $\alpha_k$ and conjugate coefficient $\beta_k$ determine the convergence behavior of the NCG.  The sequence is called globally convergent if $\liminf_{k\to \infty} \|g_k\| = 0$, and called strongly convergent if $\lim_{k\to \infty} \|g_k\| = 0$.
\subsection{The step length}
\label{subsec.1.1}
Basically, given a search direction $d_k$, a step length $\alpha_k$ is chosen to yield a smaller value of the objective function $f$ at the updated point $x_{k+1}$. The ideal line search set the $\alpha_k$ that minimizes $f(x_k+\alpha d_k)$,
\[
    \alpha_k  = \arg\min_\alpha f(x_k+\alpha d_k).
\]
Since $f$ is nonlinear and not convex in many application, the ideal search may cost much. As a substitute, an inexact search approach is commonly used. In an inexact line search, the step length $\alpha_k$ is chosen to satisfy Armijo-Goldstein Condition \cite{A1966}
\begin{align}
    &f(x_k+\alpha_k d_k) 
    \leq f(x_k)+\rho\alpha_k \langle g_k,d_k\rangle \label{armijo1}\\ 
    &f(x_k+\alpha_k d_k) 
    \geq f(x_k)+(1-\rho)\alpha_k \langle g_k,d_k\rangle,  \label{armijo2}
\end{align}
or the standard Wolfe-Powell conditions \cite{W1969}
\begin{align}
    &f(x_k+\alpha_k d_k) 
    \leq f(x_k)+\rho\alpha_k \langle g_k,d_k\rangle \label{wolfe1}\\ 
    &\langle\nabla f(x_k+\alpha_k d_k),d_k\rangle 
    \geq \sigma \langle g_k,d_k \rangle,  \label{wolfe2}
\end{align}
or the strong Wolfe-Powell conditions\cite{W1971}
\begin{align}
    &f(x_k+\alpha_k d_k) 
    \leq f(x_k)+\rho\alpha_k \langle g_k,d_k\rangle \label{wolfe0}\\ 
    &|\langle\nabla f(x_k+\alpha_k d_k),d_k\rangle| 
    \leq -\sigma \langle g_k,d_k \rangle.\label{wolfe3}
\end{align}
Here, the parameters $\rho$ and $\sigma$ are positive and $\rho<\sigma<1$. Generally, $\alpha_k$ can be obtained via bisection \cite{MS1982} or interpolation \cite{SY2006}, or combination of the two approaches \cite{F2013}. 

\subsection{The search direction}
\label{subsec.1.2}
In nonlinear conjugate gradient methods, the behavior of $d_{k+1}$ is determined by $\beta_k$. There are many approaches in the literature to setting $\beta_k$.  Classical formulas for $\beta_k$ are called Fletcher-Reeves (FR) \cite{F1964}, Hestenes-Stiefel (HS) \cite{HS1952}, Polak-Ribiere-Polyak (PRP) \cite{P1969}. They are given by
\begin{align*}
    \beta_k^{\rm FR} = \frac{\|g_{k+1}\|_2^2}{\|g_k\|_2^2}, \quad
    \beta_k^{\rm HS} = \frac{\langle g_{k+1},  y_k\rangle}{\langle d_k,  y_k\rangle},\quad
    \beta_k^{\rm PRP} = \frac{\langle g_{k+1},y_k\rangle}{\|g_k\|_2^2},
\end{align*} 
where $y_k = g_{k+1}-g_k$. In practice, the PRP method outperforms others in many optimization problems because it can immediately recover after generating a tiny step. However, the PRP method only guarantees global convergence for strictly convex functions, limiting its applicability. To improve it, Gilbert and Nocedal \cite{GN1992} modified the PRP method by setting
\begin{align}
    \beta_k^{\rm PRP+} = \max\{\beta_k^{\rm PRP},0\}, 
\end{align}
and showed that this modification of the PRP method, called PRP+, is globally convergent if the search direction is sufficient descending and the step length satisfies the standard Wolfe conditions.  

In recent years, a variety of new nonlinear conjugate gradient methods have been proposed to find a search direction satisfying the descending condition
$
 \langle d_{k+1}, g_{k+1}\rangle <0
$
or the sufficient descending condition
$
 \langle d_{k+1}, g_{k+1}\rangle <-c\|g_{k+1}\|^2,
$
where $c$ is a positive number. In \cite{DY1999}, Dai and Yuan proposed a formula with
\begin{align}\label{DY}
    \beta_k^{\rm DY} &= \frac{\langle g_{k+1}, d_{k+1}\rangle }{\langle g_k,  d_k\rangle},
\end{align}
and it provides a descending direction. In \cite{HZ2005}, Hager and Zhang modified HS method to
\begin{align}
    \beta_k^{\rm HZ} = \max\{\beta_k^{\rm HS} - \frac{2\|y_{k}\|^2\langle g_{k+1}, d_k\rangle}{\langle d_k, y_{k} \rangle^2}, -\frac{1}{\|d_k\|\min\{\eta, \|g_k\|\}}\},\label{HZ}
\end{align}
where $\eta >0$ is a constant.  Similar modification on PRP method was proposed by Yuan \cite{Y2009}, that is,
\begin{align}
\beta_k^{\rm PRP-Y} = \max\{\beta_k^{\rm PRP} - \frac{\nu\|y_k\|^2}{\|g_k\|^4}
     \langle g_{k+1},d_k\rangle, 0\} \quad \nu>\frac{1}{4}. \label{PRP-Y}\end{align}
Both $\beta_k^{\rm HZ}$ and $\beta_k^{\rm MPRP}$ provide a sufficient descent direction. 

The convergence of the above nonlinear conjugate gradient methods requires the gradient $g(x)$ of the objective function $f(x)$ to be Lipschitz continuous. That is, there exists a constant $L>0$ such that 
\begin{align}
    \|g(x) - g(y) \| \leq L\|x-y\|, \quad \mbox{for all} \quad x,y \in \mathbb{R}^n.
\end{align}
This requirement of the Lipschitz continuous gradient limits the application of nonlinear conjugate gradient methods when faced with complicated practical problems.

\subsection{Our contribution}
\label{subsec.1.3}
In this paper, we propose a modified nonlinear conjugate gradient method, which does not require the gradient $g(x)$ to be Lipschitz continuous. The novelty of our approach comes from two aspects:
\begin{itemize}
    \item We propose a new formula for $\beta_k$, called MPRP, obtaining an adequate descending direction. The strong convergence of our approach is guaranteed  even though $f(x)$ is just a continuous differential function with a non-Lipschitz gradient.
    \item We suggest a more straightforward line search method for a step length that satisfies the standard Wolfe conditions in finite iterations. In practice, it works very well. The line search iteration terminates within one or two iterations generally in our experiments.
\end{itemize}

This paper is organized as follows. In Section \ref{sec.2}, we propose our new formula for $\beta_k$, and the line search approach is given in Section \ref{sec.3}. We discuss the convergence of our method in Section \ref{sec.4}. The numerical experiment is also given in Section \ref{sec.5}, to show the performance of our approach. At last, we end the article with a conclusion in Section \ref{sec.6}.

\section{The new formula for $\beta_k$}
\label{sec.2}
Besides the Lipschitz continuity of the gradient,  the convergence of the PRP-Y method \cite{Y2009} requires that the step length $\{\alpha_k\}$ have a positive lower bound. This requirement is difficult to guarantee in practice. To weaken the conditions required for the convergence of the PRP-Y method, we provide a formula for $\beta_k$ as
\begin{align}\label{MPRP}
    \beta_k = 
    \min\Big\{\frac{\langle g_{k+1},g_{k+1}-g_k
        -\frac{\nu\|g_{k+1}-g_k\|^2}{\|g_k\|^2}d_k\rangle_{+}}{\|g_k\|_2^2}, \frac{\kappa\|g_{k+1}\|_2}{\|d_k\|_2} \Big\}.
\end{align}
where $\nu>\frac{1}{4}$ as in (\ref{PRP-Y}) and $\kappa>0$. The modification can guarantee a stronger sufficient descent condition than that of the PRP-Y method. That is, 
\begin{lemma}\label{lma:sdc}
Let $\beta_k$ be defined by (\ref{MPRP}) and $\mu = \frac{4\nu-1}{4\nu(1+\kappa)}$. Then  
\begin{equation}\label{cond:sdc}
    \langle d_{k+1},g_{k+1} \rangle \leq -\mu \|d_{k+1}\|_2\|g_{k+1}\|_2.
\end{equation}
\end{lemma}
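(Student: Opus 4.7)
The plan is to start by expanding the quantity of interest as
\[
\langle d_{k+1},g_{k+1}\rangle \;=\; -\|g_{k+1}\|^2 + \beta_k\,\langle g_{k+1},d_k\rangle,
\]
and to bound the cross term $\beta_k\,\langle g_{k+1},d_k\rangle$ from above by $\frac{1}{4\nu}\|g_{k+1}\|^2$, regardless of which branch of the $\min$ (or $(\cdot)_+$) is active. For convenience I would write $p=\langle g_{k+1},y_k\rangle$, $q=\langle g_{k+1},d_k\rangle$, $s=\|g_k\|^2$, $t=\|y_k\|^2$, and set $B_1=\frac{(p-\nu t q/s)_+}{s}$, $B_2=\frac{\kappa\|g_{k+1}\|}{\|d_k\|}$, so that $\beta_k=\min\{B_1,B_2\}\ge 0$.

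The heart of the argument is the $B_1$ bound. If $q\le 0$ then $\beta_k q\le 0$ and we are done. If $q>0$, then since $\beta_k\le B_1$ and $B_1 q\ge 0$, it suffices to bound $B_1 q$. Either the plus part is inactive, in which case $B_1=0$ and there is nothing to show, or $B_1 q = (pq-\nu t q^2/s)/s$. Here I would apply Cauchy-Schwarz $|p|\le \|g_{k+1}\|\sqrt{t}$ and the weighted AM-GM inequality $ab\le \frac{1}{4\nu}a^2+\nu b^2$ with $a=\|g_{k+1}\|$, $b=|q|\sqrt{t}/s$, yielding
\[
\frac{pq}{s}\;\le\;\frac{\|g_{k+1}\|\sqrt{t}\,|q|}{s}\;\le\;\frac{1}{4\nu}\|g_{k+1}\|^2+\frac{\nu t q^2}{s^2},
\]
which cancels the $\nu tq^2/s^2$ term and leaves $\beta_k q\le B_1 q\le \frac{1}{4\nu}\|g_{k+1}\|^2$. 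Combined with the expansion above this gives $\langle d_{k+1},g_{k+1}\rangle \le -\frac{4\nu-1}{4\nu}\|g_{k+1}\|^2$.

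Next I would use the other arm of the $\min$, namely the role of $B_2$, to bound $\|d_{k+1}\|$. From $d_{k+1}=-g_{k+1}+\beta_k d_k$ and $\beta_k\le B_2=\kappa\|g_{k+1}\|/\|d_k\|$, the triangle inequality gives $\|d_{k+1}\|\le \|g_{k+1}\|+\beta_k\|d_k\|\le (1+\kappa)\|g_{k+1}\|$, so $\|g_{k+1}\|\ge \|d_{k+1}\|/(1+\kappa)$. Substituting this into the earlier bound yields
\[
\langle d_{k+1},g_{k+1}\rangle \;\le\; -\frac{4\nu-1}{4\nu}\|g_{k+1}\|^2 \;\le\; -\frac{4\nu-1}{4\nu(1+\kappa)}\|g_{k+1}\|\,\|d_{k+1}\| \;=\; -\mu\|d_{k+1}\|\,\|g_{k+1}\|,
\]
which is the desired inequality.

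The main obstacle is purely bookkeeping across the $(\cdot)_+$ truncation and the $\min$ truncation; the analytic content is a single weighted AM-GM step. The crucial observation is that the $\min$ structure is what simultaneously controls both sides: $\beta_k\le B_1$ delivers the PRP-Y-style cancellation that forces $\langle d_{k+1},g_{k+1}\rangle \le -\frac{4\nu-1}{4\nu}\|g_{k+1}\|^2$, while $\beta_k\le B_2$ guarantees $\|d_{k+1}\|\le (1+\kappa)\|g_{k+1}\|$, and the product of these two facts gives exactly the constant $\mu=\frac{4\nu-1}{4\nu(1+\kappa)}$ appearing in the statement.
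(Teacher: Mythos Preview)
Your proof is correct and follows essentially the same route as the paper: both arguments establish $\langle d_{k+1},g_{k+1}\rangle\le -\frac{4\nu-1}{4\nu}\|g_{k+1}\|^2$ by bounding the cross term $\beta_k\langle g_{k+1},d_k\rangle\le\frac{1}{4\nu}\|g_{k+1}\|^2$, and then use $\beta_k\le\kappa\|g_{k+1}\|/\|d_k\|$ to get $\|d_{k+1}\|\le(1+\kappa)\|g_{k+1}\|$. The only cosmetic difference is that the paper handles the truncations by writing $\beta_k=\rho_k\tilde\beta_k$ with $\rho_k\in[0,1]$ and completes the square via the vector identity $\langle g_{k+1},q_k\rangle-\nu\|q_k\|^2=\frac{1}{4\nu}\|g_{k+1}\|^2-\|\frac{1}{2\sqrt\nu}g_{k+1}-\sqrt\nu q_k\|^2$, whereas you do a sign split on $q=\langle g_{k+1},d_k\rangle$ and use Cauchy--Schwarz plus scalar AM--GM; these are equivalent renderings of the same inequality.
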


\begin{proof}
Let $\tilde\beta_k = \beta_k^{\rm PRP}$ for short, and let $\tilde d_{k+1} = -g_{k+1} +\tilde\beta_k d_k$. We rewrite 
\[
    \beta_k = \rho_k \tilde\beta_k,\quad
    d_{k+1} =\rho_k\tilde d_{k+1}+(\rho_k-1)g_{k+1}
\]
with a scale $\rho_k\in[0,1]$ since $\beta_k\leq \tilde\beta_k$. At first, we require the 
inequality 
\begin{align}\label{bound:beta_k}
    \tilde\beta_k\langle d_k,g_{k+1}\rangle 
    \leq \frac{1}{4\nu} \|g_{k+1}\|^2,
\end{align}
concluded by the definition 
$\tilde\beta_k
=\frac{\langle g_{k+1},g_{k+1}-g_k\rangle}      
{\|g_k\|_2^2}-\frac{\nu\|g_{k+1}-g_k\|^2}{\|g_k\|^4}
    \langle g_{k+1},d_k\rangle$.
It gives
\begin{align*}
    \tilde\beta_k\langle d_k,g_{k+1}\rangle 
    &= \frac{\langle g_{k+1},g_{k+1}-g_k\rangle}{\|g_k\|_2^2}\langle g_{k+1},d_k\rangle
        -\frac{\nu\|g_{k+1}-g_k\|^2}{\|g_k\|^4}
        \langle g_{k+1},d_k\rangle^2\\
    &= \Big\langle g_{k+1},\frac{\langle g_{k+1},d_k\rangle}{\|g_k\|_2^2}(g_{k+1}-g_k)\Big\rangle
        -\frac{\nu\langle g_{k+1},d_k\rangle^2}{\|g_k\|^4}
        \|g_{k+1}-g_k\|^2.
\end{align*}
Let $q_k = \frac{\langle g_{k+1},d_k\rangle}{\|g_k\|_2^2}(g_{k+1}-g_k)$ for simplicity. Then
\[
    \tilde\beta_k\langle d_k,g_{k+1}\rangle
    = \langle g_{k+1}, q_k\rangle-\nu\|q_k\|^2
    = \frac{\|g_{k+1}\|^2}{4\nu} - \|(2\sqrt{\nu})^{-1}g_{k+1} -\sqrt{\nu}q_k\|^2.
\]
Therefore, (\ref{bound:beta_k}) is true. Following it, we get that
$
    \langle\tilde d_{k+1},g_{k+1}\rangle 
    \leq (\frac{1}{4\nu}-1) \|g_{k+1}\|^2
$
and 
\begin{align*}
    \langle d_{k+1},g_{k+1} \rangle 
    =&\ \rho_k\langle \tilde d_{k+1},g_{k+1} \rangle +(\rho_k-1)\|g_{k+1}\|^2\\
    \leq&\ \big(\rho_k(\frac{1}{4\nu}-1)+(\rho_k-1)\big)\|g_{k+1}\|^2
    \leq \frac{1-4\nu}{4\nu}\|g_{k+1}\|^2.
\end{align*}
Here we have used $\rho_k\leq 1$. 
On the other hand, since $|\beta_k|\leq \frac{\kappa\|g_{k+1}\|}{\|d_k\|}$, we also have that  
\[
    \|d_{k+1}\|
    =\|-g_{k+1}+\beta_k d_k\|
    \leq\|g_{k+1}\|+|\beta_k|\|d_k\|
    \leq (1+\kappa)\|g_{k+1}\|.
\]
Therefore, (\ref{cond:sdc}) holds 
since $\|g_{k+1}\|_2^2\geq \frac{1}{1+\kappa}\|d_{k+1}\|_2\|g_{k+1}\|_2$ and $\frac{1-4\nu}{4\nu}<0$.
\end{proof}
We call a search direction $d_{k+1}$ an adequate descending direction if it satisfies (\ref{cond:sdc}). Obvious, the gradient itself satisfies (\ref{cond:sdc}) with $\mu = 1$. To the best of our knowledge, (\ref{MPRP}) is the first conjugate gradient method that provides an adequate descending direction.

\section{A simple interpolation line search approach}
\label{sec.3}
For a general continuously differentiable $f(x)$, the interpolation method does not guarantee capturing required $\alpha_k$ satisfying the standard Wolfe conditions since it asks for a three times continuously differentiable \cite{SY2006}. One can get $\alpha_k$ by the combination method \cite{F2013} that is more efficient than the bisection approach \cite{MS1982}. In \cite{F2013}, the bisection is combined with the interpolation in a bit complicated way for interval shrinking. Here we give a simpler approach for determining $\alpha_k$ satisfying the weak Wolfe-Powell conditions.

Theoretically, at a current point $x=x_k$ with the conjugate direction $d=d_k$, the required inexact line search $\alpha=\alpha_k$ satisfying the weak Wolfe-Powell conditions (\ref{wolfe1}-\ref{wolfe2}) can be chosen as
\begin{align}\label{alpha}
    \alpha^* = \sup\big\{\hat\alpha: \mbox{the Wolfe-Powell condition (\ref{wolfe1}) holds over $(0,\hat\alpha)$ }\big\}.
\end{align}
It exists, is positive, and satisfies (\ref{wolfe1}-\ref{wolfe2}). To verify this claim, let's consider the function
\[
    g(\alpha) = f(x)+\rho\alpha\langle\nabla f(x),d\rangle-f(x+\alpha d).
\]
Clearly, (\ref{wolfe1}) is equivalent to $g(\alpha)\geq 0$, and meanwhile, (\ref{wolfe2}) holds if $g'(\alpha)\leq0$. By the definition and the continuity of $f$, (\ref{wolfe1}) is true for $0<\alpha\leq\alpha^*$. The supremum in (\ref{alpha}) implies that $g(\alpha^*)=0$ and $g'(\alpha^*)\leq 0$. Hence, (\ref{wolfe2}) is also satisfied for $\alpha=\alpha^*$. Practically, there is a relative large sub-interval of $(0,\alpha^*]$ in which both (\ref{wolfe1}) and (\ref{wolfe2}) are true. For instance, if $\hat\alpha\in(0,\alpha^*]$ is the largest point such that $g(\alpha)$ is a local maximum, then $g'(\alpha)\leq 0$ in $[\hat\alpha,\alpha^*]$. Therefore, (\ref{wolfe1}-\ref{wolfe2}) hold for $\alpha\in[\hat\alpha,\alpha^*]$. 

An ideal choice of $\alpha$ is the minimizer $\alpha_{\min}$ of $f(x+\alpha d)$ over $(0, \alpha^*]$ since it decreases $f$ as small as possible, while both (\ref{wolfe1}) and (\ref{wolfe2}) are still satisfied.
In this subsection, we give a simple rule for pursuing $\alpha_{\min}$ via a quadratic interpolation to $f(x+\alpha d)$, assuming $f$ is continuously differentiable. It generates a nested and shrunk interval sequence containing the required $\alpha$. The pursuing terminates as soon as a point satisfying (\ref{wolfe1}-\ref{wolfe2}) is found.

Initially, we set $\alpha_0'=0$ that satisfies (\ref{wolfe1}) but (\ref{wolfe2}), and choose a relatively large $\alpha_0''>0$ that does not satisfy (\ref{wolfe1}). A simple choice of $\alpha_0''$ will be given later. Starting with $[\alpha_0', \alpha_0'']$, we generate a sequence of intervals $[\alpha_0', \alpha_0'']$ iteratively such that each $\alpha_\ell'$ satisfies (\ref{wolfe1}) but  $\alpha_\ell''$ does not, and meanwhile, $\alpha_\ell'$ doesn't satisfy (\ref{wolfe2}). That is, for $x_{\ell}' = x+\alpha_\ell'd$ and $x_{\ell}'' = x+\alpha_\ell''d$
\begin{align}\label{cond_end}
    f(x_{\ell}')\leq f(x)+\rho\alpha_\ell'\langle g,d\rangle,\
    f(x_{\ell}'')> f(x)+\rho\alpha_\ell''\langle g,d\rangle,\
    \langle\nabla f(x_{\ell}'),d\rangle<\sigma\langle g,d\rangle,
\end{align}
where $g = \nabla f(x)$. The third inequality above implies that $\langle\nabla f(x_{\ell}'),d\rangle<0$.  
Furthermore, by the first two inequalities in (\ref{cond_end}), we have that
\begin{align}\label{cond_alpha''}
    f(x_{\ell}'')> f(x_{\ell}')+\rho(\alpha_\ell''-\alpha_\ell')\langle g,d\rangle
    >f(x_{\ell}')+\frac{\rho}{\sigma}(\alpha_\ell''-\alpha_\ell')\langle\nabla f(x_{\ell}'),d\rangle.
\end{align}
In the current interval, we consider a quadratic function $q(\alpha)$ with interpolation conditions 
\[
    q(\alpha_\ell') = f(x_{\ell}'), \quad 
    q'(\alpha_\ell') = \langle\nabla f(x_{\ell}'),d\rangle, \quad 
    q(\alpha_\ell'') = f(x_{\ell}''),
\]
It can be represented as 
\begin{align*}
    q(\alpha) 
    = &\ f(x_{\ell}')+(\alpha-\alpha_\ell') \langle\nabla f(x_{\ell}'),d\rangle \\
    & +\big(f(x_{\ell}'')-f(x_{\ell}')
        -(\alpha_\ell''-\alpha_\ell')\langle\nabla f(x_{\ell}'),d\rangle\big)
    \frac{(\alpha-\alpha_\ell')^2}{(\alpha_\ell''-\alpha_\ell')^2}
\end{align*}
with the minimizer $c_\ell = \arg\min_{\alpha}q(\alpha)$ given by
\begin{align}\label{alpha_k}
    c_\ell = \alpha_\ell'
        +\frac{\alpha_\ell''-\alpha_\ell'}{2}
         \frac{-(\alpha_\ell''-\alpha_\ell')
                \langle\nabla f(x_{\ell}'),d\rangle}
            {f(x_{\ell}'')-f(x_{\ell}') -(\alpha_\ell''-\alpha_\ell')
             \langle\nabla f(x_{\ell}'),d\rangle}
    >\alpha_\ell'.
\end{align}
By the Mean-Value Theorem for derivatives and the second inequality in (\ref{cond_alpha''}), 
\begin{align}\label{sigma}
    0<(1-M_{\ell})^{-1}
    \leq&\ \Big(1-\frac{\langle\nabla f(\bar x_{\ell}),d\rangle}
                    {\langle\nabla f(x_{\ell}'),D_k\rangle}\Big)^{-1} \nonumber\\
    =&\ \frac{-(\alpha_\ell''-\alpha_\ell')
                \langle\nabla f(x_{\ell}'),D_k\rangle}
            {f(x_{\ell}'')-f(x_{\ell}') -(\alpha_\ell''-\alpha_\ell')
             \langle\nabla f(x_{\ell}'),D_k\rangle}
    <\frac{\sigma}{\sigma-\rho},
\end{align}
where $\bar x_{\ell} = x+\bar\alpha_\ell d$ with 
$\bar\alpha_\ell\in [\alpha_\ell', \alpha_\ell'']$ and 
\[
    M_{\ell} 
    = \min_{\alpha\in [\alpha_\ell', \alpha_\ell'']}
    \frac{\langle\nabla f(x+\alpha d),d\rangle}
         {\langle\nabla f(x_{\ell}'),d\rangle}
    \leq \frac{\langle\nabla f(\bar x_{\ell}),d\rangle}
              {\langle\nabla f(x_{\ell}'),d\rangle}
    <\frac{\rho}{\sigma}. 
\]
Hence,  if $0<2\rho<\sigma$, we have that
\begin{align}\label{c_ell bound}
    \alpha_\ell'<\alpha_\ell'+ \frac{1}{2(1-M_{\ell})}(\alpha_\ell''-\alpha_\ell')
    < c_\ell <\alpha_\ell'+\frac{\sigma}{2(\sigma-\rho)}(\alpha_\ell''-\alpha_\ell')
    <\alpha_\ell''.
\end{align}

We may shrink $[\alpha_\ell',\alpha_\ell'']$ to $[c_{\ell},\alpha_\ell'']$ or $[\alpha_\ell',c_{\ell}]$, if $\alpha=c_{\ell}$ satisfies (\ref{wolfe1}) or does not. However, if (\ref{wolfe1}) is satisfied, the interval length is 
$
    \alpha_\ell''-c_{\ell}
    \leq \frac{1-2M_{\ell}}{2-2M_{\ell}}(\alpha_\ell''-\alpha_\ell').
$
When $M_{\ell}<0$ and $|M_{\ell}|$ is large, $\frac{1-2M_{\ell}}{2-2M_{\ell}}\approx 1$. The interval shrinking is inefficient in this case. 
To avoid this phenomenon, we slightly modify $c_\ell$ as that with $\eta = \frac{\sigma}{2(\sigma - \rho)}$
\begin{align}\label{tilde_c}
    \tilde c_{\ell} 
    = \max\big\{ c_{\ell},\ \eta \alpha_{\ell}' +(1-\eta)\alpha_{\ell}'' \big\}
    \in(\alpha_\ell',\alpha_\ell'').
\end{align}
Since $\tilde c_{\ell} \geq\eta \alpha_{\ell}' +(1-\eta)\alpha_{\ell}''$ and $c_\ell<\alpha_\ell'+\eta(\alpha_\ell''-\alpha_\ell')$ by (\ref{c_ell bound}), we get
\[
    \alpha_{\ell}''- \tilde c_{\ell} \leq \eta(\alpha_\ell''-\alpha_\ell'),\quad
    \tilde c_{\ell}- \alpha_{\ell}'
      \leq \max\big\{\eta,1-\eta\big\}(\alpha_\ell''-\alpha_\ell')
      = \eta(\alpha_\ell''-\alpha_\ell').
\]
The last equality holds since $\eta>1/2$. Hence, if the Wolfe-Powell conditions (\ref{wolfe1}-\ref{wolfe2}) are satisfied for $\alpha = \tilde c_\ell$, we get the required $\alpha_k=\tilde c_\ell$. Otherwise, shrink $[\alpha_\ell',\alpha_\ell'']$ as 
\begin{align}\label{interval_alpha}
    [\alpha_{\ell+1}',\alpha_{\ell+1}'']
    =\left\{\begin{array}{ll}
         [\alpha_\ell', \tilde c_\ell], & \ \mbox{if (\ref{wolfe1}) does not hold for $\alpha = \tilde c_{\ell}$}; \\
         \mbox{$[\tilde c_\ell,\alpha_\ell'']$}, & \ \mbox{otherwise}.
    \end{array}
    \right.
\end{align}
The interval length is significantly decreased as $0<\alpha_{\ell+1}''-\alpha_{\ell+1}'\leq \eta(\alpha_{\ell}''-\alpha_{\ell}')$,
where $\eta<1$ since $2\rho <\sigma$. Hence, $\alpha_{\ell}''-\alpha_{\ell}'\to 0$ as $\ell\to \infty$. 

A good choice of $\alpha_0''$ helps to pursue the minimizer $\alpha_{\min}$. 
Motivated by the above analysis on the estimation of the shrinking rate $\eta_{\ell}$, we suggest the experiential setting 
\begin{align}\label{alpha_0''}
    \alpha_0'' = \min\big\{\alpha = 2^p\eta: \ 
    \mbox{(\ref{wolfe1}) is not satisfied for $\alpha=2^p\eta$ with integer $p\geq 0$} \big\}.
\end{align}
Algorithm \ref{alg:stepsize} gives the details of the procedure for determining an inexact line search $\alpha_k$, given $x_k$, $f_k$, $g_k$, the conjugate direction $d_k$.

\begin{algorithm}[t]
\setstretch{1.35}
\caption{An inexact line search satisfying the weak Wolfe-Powell conditions} \label{alg:stepsize}
\begin{algorithmic}[1]
    \REQUIRE point $x$, $f = f(x)$, $g=\nabla f(x)$, direction $d$, and parameters $\sigma$, $\rho$.
    \ENSURE $\alpha$ satisfying (\ref{wolfe1}-\ref{wolfe2}) within accuracy $\varepsilon$, $x := x+\alpha d$, $f(x)$, and $g=\nabla f(x)$.
    \STATE Set $\alpha' = 0$, $x' = x$, $f' = f$, $g' = g$, 
        $\nu = \rho\langle g,d\rangle$. Find the smallest integer $p\geq 1$ such that (\ref{wolfe1}) does not hold for $\alpha =\eta 2^p$, and set $\alpha''=\eta 2^p$.
    \STATE Repeat the following iteration until terminating. 
    \STATE \hspace{10pt} 
        Compute $c$ as (\ref{alpha_k}), $\tilde c$ as (\ref{tilde_c}), and 
        $\tilde f = f(\tilde x)$ at $\tilde x=x+\tilde cd$. 
    \STATE \hspace{10pt} 
        If $\tilde f > f+\tilde c\nu$, update $(\alpha'',f(x''))$ by $(\tilde c,f(\tilde x))$ and go to Step 3.
    \STATE \hspace{10pt} 
        Compute $\tilde g = \nabla f(\tilde x)$. 
        If $\langle\tilde g, d\rangle \geq \sigma\langle g,d \rangle$, set 
        $x =\tilde x$, $f = \tilde f$, $g = \tilde g$, and terminate.
    \STATE \hspace{10pt} 
        Otherwise, update $\alpha',f',g'$ by $\tilde c$, $\tilde f,\tilde g$, respectively.
    \STATE End iteration
\end{algorithmic}
\end{algorithm}
\section{Convergence of the Algorithm}
\label{sec.4}
Combining the formula (\ref{MPRP}) and line search Algorithm \ref{alg:stepsize}, we are able to provide our modified PRP-type (MPRP) nonlinear conjugate gradient method, as shown in Algorithm \ref{alg:ncg}. To show the convergence of the MPRP method, we first prove that the line search Algorithm \ref{alg:stepsize} will converge to a step length that satisfies the standard Wolfe condition.
\begin{algorithm}[t]
\setstretch{1.35}
\caption{The modified PRP-type (MPRP) nonlinear conjugated gradient method}
\label{alg:ncg}
\begin{algorithmic}[1]
    \REQUIRE initial point $x$, parameters $\epsilon$, $\rho$, $\sigma$, $\nu$, $\kappa$,  and $k_{\max}^{\rm NCG}$.
    \ENSURE an approximate solution $x_*$ of $\min_x f(x)$ with the given accuracy
    \STATE Compute $f = f(x)$, $g = \nabla f(x)$, and set $d = -g$.
    \STATE For $k=1,\cdots, k_{\max}^{\rm NCG}$,
    \STATE \hspace{10pt} 
        Update $(x,f,g)$ by \textbf{Algorithm \ref{alg:stepsize}} with searching direct $d$.
    \STATE \hspace{10pt} 
         If $\|g\|_{\infty}< \epsilon$, 
        then set $x^* = x$ and terminate the iteration. 
    \STATE \hspace{10pt} 
        Otherwise, compute $\beta$ by (\ref{MPRP}) and update $d := -g+\beta d$.
    \STATE End for
\end{algorithmic}
\end{algorithm}

\begin{lemma}
If $f$ is lower bounded and continuously differentiable, an $\alpha = \tilde c_{\ell^*}$ satisfying (\ref{wolfe1}-\ref{wolfe2}) can be obtained within a finite iterations of (\ref{interval_alpha}) if $0< 2\rho <\sigma<1$. 
\end{lemma}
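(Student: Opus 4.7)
The plan is first to verify that $\alpha_0''$ in (\ref{alpha_0''}) is well defined, and then to show that the shrinkage rule (\ref{interval_alpha}) cannot continue indefinitely. Existence of $\alpha_0''$ uses two ingredients: the search direction satisfies $\langle g,d\rangle<0$ (guaranteed by Lemma \ref{lma:sdc} when $d$ is produced by the MPRP formula, and trivially when $d=-g$ initially), and $f$ is bounded below. Thus the affine Armijo right-hand side $f(x)+\rho\alpha\langle g,d\rangle$ tends to $-\infty$ as $\alpha\to+\infty$, while $f(x+\alpha d)\geq\inf f$, so (\ref{wolfe1}) must eventually fail for some $\alpha=2^{p}\eta$ with integer $p\geq 0$, producing a finite $\alpha_0''$.

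For the shrinking phase I would exploit the bound already derived just above (\ref{interval_alpha}): each non-terminating iteration contracts the bracket by a factor at most $\eta=\sigma/(2(\sigma-\rho))$, which lies strictly in $(0,1)$ precisely because $2\rho<\sigma$. If the algorithm never terminated, we would obtain nested intervals $[\alpha_\ell',\alpha_\ell'']$ with $\alpha_\ell''-\alpha_\ell'\leq\eta^{\ell}(\alpha_0''-\alpha_0')\to 0$, so the increasing sequence $\{\alpha_\ell'\}$ and the decreasing sequence $\{\alpha_\ell''\}$ would share a common limit $\alpha^*>0$.

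The contradiction at $\alpha^*$ would come from the Mean Value Theorem applied on each bracket. For each $\ell$ there exists $\xi_\ell\in(\alpha_\ell',\alpha_\ell'')$ with $f(x+\alpha_\ell'' d)-f(x+\alpha_\ell' d)=(\alpha_\ell''-\alpha_\ell')\langle\nabla f(x+\xi_\ell d),d\rangle$; combining with the first two inequalities of (\ref{cond_end}), the left-hand side strictly exceeds $\rho(\alpha_\ell''-\alpha_\ell')\langle g,d\rangle$, hence $\langle\nabla f(x+\xi_\ell d),d\rangle>\rho\langle g,d\rangle$. Since $\xi_\ell\to\alpha^*$ and $\nabla f$ is continuous, passing to the limit yields $\langle\nabla f(x+\alpha^* d),d\rangle\geq\rho\langle g,d\rangle$. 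On the other hand, the third inequality of (\ref{cond_end}) with $\alpha_\ell'\to\alpha^*$ yields $\langle\nabla f(x+\alpha^* d),d\rangle\leq\sigma\langle g,d\rangle$. Combining the two sandwich bounds gives $\rho\langle g,d\rangle\leq\sigma\langle g,d\rangle$, and since $\langle g,d\rangle<0$ this is equivalent to $\rho\geq\sigma$, contradicting the hypothesis $2\rho<\sigma<1$.

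The main obstacle I expect is bookkeeping rather than any deep analytic point: one must check that the invariant (\ref{cond_end}) is preserved by the update (\ref{interval_alpha}), that the modification (\ref{tilde_c}) keeps $\tilde c_\ell$ strictly inside $(\alpha_\ell',\alpha_\ell'')$ so each new bracket is genuinely non-degenerate, and that the sign of $\langle g,d\rangle$ is tracked carefully so the limiting sandwich inequality really does flip $\rho<\sigma$ into a contradiction. Once those verifications are in place, the lemma follows.
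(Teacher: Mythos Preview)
Your proposal is correct and follows essentially the same line as the paper's proof: assume the procedure never terminates, use the contraction factor $\eta<1$ to get a common limit $\alpha^*$, and derive a contradiction by comparing the limiting directional derivative (via the difference quotient/MVT on the brackets, yielding $\langle\nabla f(x+\alpha^* d),d\rangle\geq\rho\langle g,d\rangle$) against the curvature invariant from (\ref{cond_end}) (yielding $\langle\nabla f(x+\alpha^* d),d\rangle\leq\sigma\langle g,d\rangle$). Your additional verification that $\alpha_0''$ is finite (using lower boundedness of $f$ and $\langle g,d\rangle<0$) and your bookkeeping remarks about preserving the invariant are worthwhile additions, but the core argument matches the paper's.
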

\begin{proof}
If (\ref{wolfe1}-\ref{wolfe2}) do not hold for all $\tilde c_\ell$, the updating rule (\ref{interval_alpha}) yields a sequence of nested intervals $\{[\alpha_\ell',\ \alpha_\ell'']\}$. Since $0< 2\rho <\sigma<1$, the intervals tend to a single point $\alpha_*$ and both $\{x_\ell'\}$ and $\{x_\ell''\}$ tend to $x_* = x+\alpha_* d$. Hence, by (\ref{cond_alpha''}) and the Taylor extension of $f(x+\alpha d)$ at $\alpha=\alpha_*$, we get
\begin{align}\label{sigma_d}
    \langle\nabla f(x_*),d\rangle = 
    \lim_{\ell\to \infty} \frac{f(x_{\ell}'')-f(x_{\ell}')}{\alpha_\ell'' - \alpha_\ell'}
    \geq \rho \langle\nabla f(x),d\rangle
    > \sigma \langle\nabla f(x),d\rangle
\end{align}
since $\langle\nabla f(x),d\rangle<0$ and $\rho<\sigma$. However, by (\ref{cond_end}),
$\langle\nabla f(x_*),d\rangle\leq\sigma\langle\nabla f(x),d\rangle$, a contradiction with (\ref{sigma_d}).
\end{proof}
Because the search direction of MPRP is adequate descending, the proof of convergence of the algorithm is simple, similar to the proof of the steepest descent method. We have
\begin{theorem}\label{conv_CG}
Assume that $f$ is lower bounded and continuously derivative. If the inexact line search $\{\alpha_k\}$ satisfies the weak Wolfe-Powell condition (\ref{wolfe1}-\ref{wolfe2}) and 
\begin{equation}\label{cond:dg}
    \langle d_k,g_k \rangle \leq -\mu \|d_k\|_2\|g_k\|_2
\end{equation}
for a constant $\mu>0$, then the NCG converges: $\{f(x_k)\}$ is monotone decreasing and converges, and $\nabla f(x_k)\!\to\! 0$. 
\end{theorem}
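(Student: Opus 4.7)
The plan is to separate the two conclusions: the monotone decrease of $\{f(x_k)\}$ follows almost immediately from (\ref{wolfe1}) combined with the adequate descent condition (\ref{cond:dg}), while the strong convergence $\nabla f(x_k)\to 0$ needs a contradiction argument invoking (\ref{wolfe2}) and the continuity of $\nabla f$.

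First, I would chain (\ref{wolfe1}) with (\ref{cond:dg}) to obtain
\[
    f(x_{k+1}) - f(x_k) \leq \rho\alpha_k\langle g_k, d_k\rangle \leq -\rho\mu\,\alpha_k \|d_k\|_2\|g_k\|_2.
\]
This yields the asserted monotone decrease, and combined with $\inf_x f(x) > -\infty$ forces $\{f(x_k)\}$ to converge and
\[
    \sum_{k=0}^{\infty} \alpha_k \|d_k\|_2\|g_k\|_2 < \infty,
\]
so in particular $\alpha_k\|d_k\|_2\|g_k\|_2 = \|x_{k+1}-x_k\|_2\,\|g_k\|_2 \to 0$.

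Second, I would argue by contradiction to obtain $\|g_k\|_2\to 0$. Suppose there exist $\varepsilon>0$ and a subsequence $\{k_j\}$ with $\|g_{k_j}\|_2\geq \varepsilon$; then $\|x_{k_j+1}-x_{k_j}\|_2 \to 0$. On the other hand, rearranging (\ref{wolfe2}) and applying (\ref{cond:dg}) yield
\[
    \langle g_{k+1}-g_k, d_k\rangle \geq (\sigma-1)\langle g_k, d_k\rangle \geq (1-\sigma)\mu\,\|g_k\|_2\|d_k\|_2,
\]
so Cauchy--Schwarz gives $\|g_{k_j+1}-g_{k_j}\|_2 \geq (1-\sigma)\mu\,\varepsilon$, bounded away from zero.

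The main obstacle is closing the contradiction without Lipschitz continuity of $\nabla f$: I need $g_{k_j+1}-g_{k_j}\to 0$, which requires the iterates to live in a region where $\nabla f$ is (uniformly) continuous. Under the natural companion assumption that the level set $\{x:f(x)\leq f(x_0)\}$ is bounded (implicit in a statement of this kind), I would extract a convergent subsequence $x_{k_j}\to x_*$; the relation $\|x_{k_j+1}-x_{k_j}\|_2\to 0$ then forces $x_{k_j+1}\to x_*$ as well, and pointwise continuity of $\nabla f$ at $x_*$ gives $g_{k_j+1}-g_{k_j}\to 0$, the desired contradiction. This is precisely where the adequate descent bound $-\mu\|d_k\|_2\|g_k\|_2$ earns its keep over the classical sufficient descent $-c\|g_k\|_2^2$: it supplies directly a summable quantity of the form $\|x_{k+1}-x_k\|_2\|g_k\|_2$, bypassing the Zoutendijk inequality which would otherwise demand a Lipschitz constant.
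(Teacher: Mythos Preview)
Your first step---chaining (\ref{wolfe1}) with (\ref{cond:dg}) to get monotone decrease and $\|s_k\|\,\|g_k\|\to 0$ for $s_k=\alpha_kd_k$---is exactly what the paper does. The divergence is in the contradiction argument, and there your proposal does not prove the theorem as stated: you explicitly import a bounded-level-set hypothesis in order to force $g_{k_j+1}-g_{k_j}\to 0$. That assumption is \emph{not} part of Theorem~\ref{conv_CG}, so as written you are proving a strictly weaker result.

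The paper sidesteps the need for $\|g_{k_j+1}-g_{k_j}\|\to 0$ altogether. Instead of applying Cauchy--Schwarz to the inequality $\langle g_{k+1}-g_k,d_k\rangle\geq(1-\sigma)\mu\|g_k\|\|d_k\|$, it keeps the directional quantity and controls it through function values: writing the two first-order expansions
\[
f(x_{k_i+1})=f(x_{k_i})+\langle g_{k_i},s_{k_i}\rangle+o(\|s_{k_i}\|),\qquad
f(x_{k_i})=f(x_{k_i+1})-\langle g_{k_i+1},s_{k_i}\rangle+o(\|s_{k_i}\|),
\]
and adding them yields $\langle g_{k_i}-g_{k_i+1},s_{k_i}\rangle=o(\|s_{k_i}\|)$. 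Combined with (\ref{wolfe2}) and (\ref{cond:dg}) this gives $(1-\sigma)\mu\|g_{k_i}\|\leq -o(\|s_{k_i}\|)/\|s_{k_i}\|\to 0$, contradicting $\liminf\|g_{k_i}\|>0$. No compactness of the level set is invoked. Your instinct that some uniformity is lurking is not wrong---the paper's $o(\|s_{k_i}\|)$ along a moving sequence of base points quietly needs the Taylor remainders to vanish uniformly in $i$---but the paper's argument stays with the one-dimensional directional information rather than escalating to the full gradient increment, and it does not add any hypothesis beyond those in the statement. Dropping your Cauchy--Schwarz step and working directly with $\langle g_{k_j+1}-g_{k_j},s_{k_j}\rangle/\|s_{k_j}\|$ would bring your argument in line with the paper's.
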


\begin{proof}
We assume $g_k=\nabla f(x_k)\neq 0$ for each $k$  without loss of generalities, and let $s_k = \alpha_kd_k$. The condition (\ref{cond:dg}) becomes
$\langle g_k,s_{k}\rangle\leq-\mu\|g_k\|\|s_{k}\|\leq 0$. Hence, the Wolfe-Powell condition (\ref{wolfe1}) gives the monotone decreasing of $\{f(x_k)\}$,
\[
    f(x_{k+1})-f(x_k)\leq\rho\langle g_k,s_k\rangle
    \leq-\rho\mu\|g_k\|\|s_{k}\|\leq0,
\]
and $\{f(x_k)\}$ is convergent since $f$ itself is lower bounded. We also conclude from the convergence and the above inequality that  $\|g_k\|\|s_{\!k}\|\to 0$. 

We further show that $\|g_k\|\to 0$. Otherwise, there is a subsequence $\{\|g_{k_i}\|\}$ that has a positive lower bound. The lower bound implies that $\|s_{\!k_i}\|\to 0$ since   we also have $\|g_{k_i}\|\|s_{\!k_i}\|\to 0$. Note that $s_{k_i}=x_{k_i+1}-x_{k_i}$ is also the gap vector between $x_{k_i}$ and $x_{k_i+1}$, $f(x_{k_i})$ and $f(x_{k_i+1})$ can be represented each other in terms of $s_{k_i}$ via the Taylor extensions
\begin{align*}
    &f(x_{k_i+1}) 
    = f(x_{k_i}) + \langle g_{k_i},s_{k_i}\rangle + o\big(\|s_{k_i}\|\big),\\ 
    &f(x_{k_i}) 
    = f(x_{k_i+1}) - \langle g_{k_i+1},s_{k_i} \rangle + o\big(\|s_{k_i}\|\big).
\end{align*}
Clearly, the each other implies that $\langle g_{k_i},s_{k_i} \rangle - \langle g_{k_i+1},s_{k_i} \rangle  = o(\|s_{k_i}\|) $.

Turn back to the Wolfe-Powell condition (\ref{wolfe2}). Since it gives $\langle g_{k_i+1},s_{k_i}\rangle \geq \sigma \langle g_{k_i},s_{k_i}\rangle$,
\[
    o(\|s_{k_i}\|) 
    = \langle g_{k_i},s_{k_i} \rangle - \langle g_{k_i+1},s_{k_i} \rangle
    \leq (1-\sigma)\langle g_{k_i},s_{k_i}\rangle
    \leq -(1-\sigma)\mu\|g_{k_i}\|\|s_{k_i}\|.
\]
Here we have used the inequality $\langle g_{k_i},s_{k_i}\rangle \leq - \mu\|g_{k_i}\|\|s_{k_i}\|$ from the condition (\ref{cond:dg}) and $\sigma < 1$. Hence, 
$(1-\sigma)\mu \|g_{k_i}\|\leq -\frac{o(\|s_{k_i}\|)}{\|s_{k_i}\|}$, and
\[
    0\leq (1-\sigma)\mu\liminf \|g_{k_i}\|
    \leq 0.
\]
It implies $(1-\sigma)\mu\leq 0$, a contradiction, since $\liminf \|g_{k_i}\|>0$ by assumption.
\end{proof}
\section{Numerical Experiments}
\label{sec.5}
In this section, we show the performance of our MPRP in Algorithm \ref{alg:ncg}, compared with three other PRP-type NCG methods:  the classical PRP method \cite{P1969}, the PRP+ method \cite{GN1992} and the PRP-Y method \cite{Y2009}. 
The numerical experiments are divided into 3 parts. In the first part, we test these PRP-type methods on $84$ unconstrained optimization problems with Lipschitz continuous gradient from \cite{A2008}. The second part aims to show the enhancements brought by our line search method, compared with the bisection line search method. In the third part, we adopt our algorithm on a regression problem with an objective function whose gradient is non-Lipschitz continuous. The following parameters were adopted in our implementation
\begin{align*}
    \nu = 0.8,\ \kappa = 10,\ \rho = 0.1,\ \sigma = 0.4. 
\end{align*}
We set the termination criterion as $\|\nabla f(x)\|_{\infty}\leq 10^{-5}$ and the maximum number of iterations as $20000$. All compared algorithms are executed on the Windows system in a PC with Intel Core i5-8250U CPU@1.80GHz and 8GB RAM. 

We adopt the commonly used performance profile of Dolan and Mor\'{e} proposed in \cite{DM2002}, to display the performance of compared NCG methods in terms of CPU time and the number of iterations.   Take the CPU time as an example, let $S$ and $P$ be the set of solvers and problems, respectively, and denote $n_s = |S|$ and $n_p = |P|$. For each solver $s$ and problem $p$, let $t_{p,s}$ be the computing time requiring by solver $s$ to solve problem $p$. For each problem $p$, define the performance ratio as $r_{p,s} = \frac{t_{p,s}}{\min\{t_{p,s},s\in S\}}$ and the ratio $r_{p,s}\ge 1$ obviously for all $p$ and $s$. If a solver fails to  solve a problem, the ratio $r_{p,s}$ is set to a large enough positive number $M$ that larger than $r_{p,s}$ of problem $p$ that can be solved by solver $s$. Finally, the performance profile is defined by 
$$\rho(\tau) = \frac{1}{n_p}|\{p\in P:r_{p,s}\leq \tau\}|$$. The performance profile of the number of iterations is similar.

\subsection{The performance of compared NCG methods on tested functions}
\label{subsec.5.1}
This subsection shows the performance of 4 tested NCG algorithms on 84 unconstrained optimization problems drawn from \cite{A2008}.  The performance profile of CPU time and number of iterations is illustrated in Figure \ref{Comparison four NCG}. 
\begin{figure}[h]
\centering
\includegraphics[scale = 0.6]{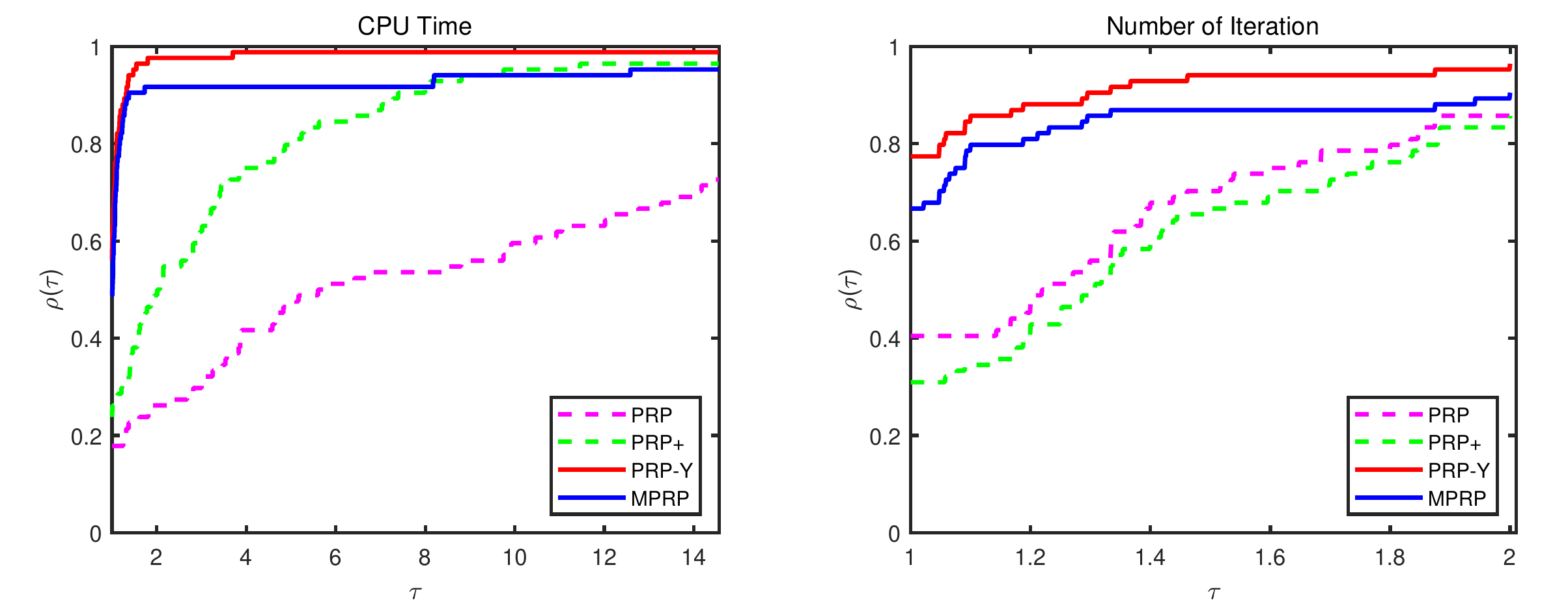}
\caption{Performance profile of PRP,PRP+,PRP-Y and MPRP with CPU Time and Number of iteration}
\label{Comparison four NCG}
\end{figure}

Among the four algorithms, MPRP and PRP-Y have significant advantages over the PRP and PRP+ method in the CPU Time and the number of iterations. However, PRP-Y performs slightly better than our MPRP. This is due to two reasons. For one thing, MPRP controls the angle between the search direction and the negative gradient, weakening the effect of the conjugate direction. Hence, the zig-zag phenomenon may occur in a small part of optimization problems.  For another thing, 
MPRP requires an additional computation of $\|d_k\|$ at each step, which also increases the CPU time slightly.

\subsection{Line searches via bisection and interpolation}
\label{subsec.5.2}
In this subsection, we test the performance of our interpolation line search in Algorithm \ref{alg:stepsize},  compared with the classical bisection line search \cite{MS1982}. The two line search approaches are used for the 4 tested NCG methods, on the 84 unconstrained optimization problems in \cite{A2008}. The Comparison is shown on CPU time and the number of iterations in Figure \ref{Comparison of line search}. 

\begin{figure}[t]
\centering
\includegraphics[scale = 0.6]{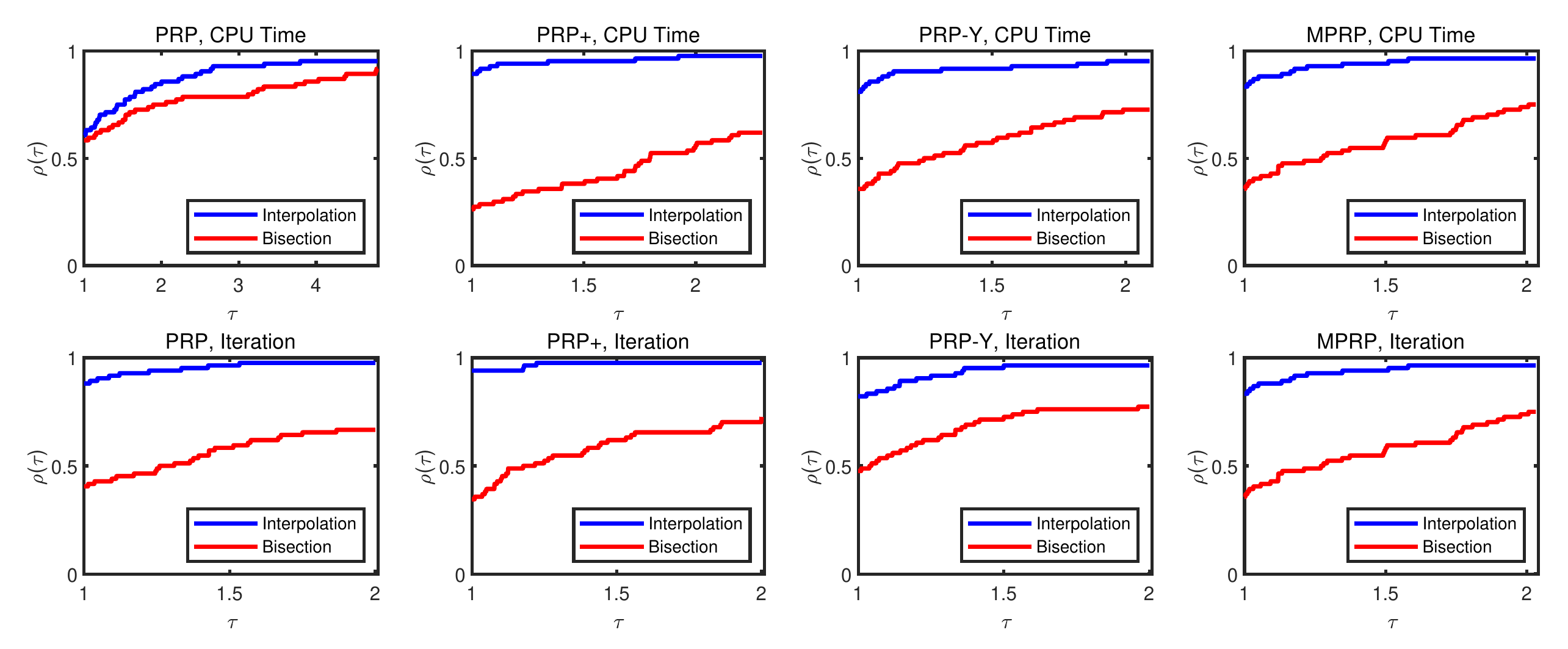}
\caption{Performance profile of different line search methods on PRP,PRP+,PRP-Y and MPRP with CPU Time and Number of iterations}
\label{Comparison of line search}
\end{figure}

As shown in Figure \ref{Comparison of line search}, the interpolation line search performs much better than the bisection line search in most problems regardless of the NCG methods we adopt. It has been shown that our interpolation line search method saves a significant amount of computation both in terms of the number of iterations and the CPU time.

\subsection{Performance of NCG methods on a function with non-Lipschitz continuous gradient}
\label{subsec.5.3}
In this subsection, we consider a linear regression model with a regular term as follows:
\begin{align}\label{regression}
    \min_x \frac{1}{2}\|Ax-b\|_2^2 + \frac{\lambda}{2}\|x\|_p^p,
\end{align}
where $\|x\|_p^p = |x_1|^p+...+|x_n|^p$. The model (\ref{regression}) becomes the lasso regression model or the ridge regression model, if one select $p = 1$, or $p = 2$, respectively. Here we set $p = 1.5$ so that $f(x) = \frac{1}{2}\|Ax-b\|_2^2 + \frac{\lambda}{2}\|x\|_p^p$ is continuous differential with a non-Lipschitz gradient, and test the compared NCG methods on it. 

In our experiment, we set $A$ as a random matrix in $\mathbb{R}^{10\times 50}$ with entries drawn from the uniform distribution in $[0,1]$, and $b = Au$ with $u$ is sparse with $10\%$ non-zero entries drawn from the standard normal distribution. We also set $\lambda = 0.01$.  We test the four algorithms 10 times, each with different random seeds. 
\begin{table}[h]
  \centering
  \caption{The performance of four NCG methods on (\ref{regression}) with $p = 1.5$.}
    \begin{tabular}{ccccc}
    \hline
    Method & PRP   & PRP+  & PRP-Y & MPRP \bigstrut\\
    \hline
    CPU Time  & 1.13  & 1.01  & 0.81  & 0.81 \bigstrut[t]\\
    Iterations & 504.5 & 447.9 & 422.9 & 419.4 \bigstrut[b]\\
    \hline
    \end{tabular}%
  \label{tab:regre}%
\end{table}%

Table \ref{tab:regre} shows the average CPU time and the average number of iterations for the 10 runs. It can be seen that the performance of PRP-Y and MPRP is slightly better than that of the other two algorithms. Surprisingly, PRP, PRP+ and PRP-Y all converge on this problem, even though they do not theoretically have a guarantee of convergence. One conjecture is that these algorithms skip the non-Lipschitz region of the gradient and converge to a stationary point with a neighbor where the gradient is Lipschitz continuous.

\section{Conclusion}
\label{sec.6}
This paper proposed a modified nonlinear conjugate gradient method for continuous differential function. The strong convergence is guaranteed without the condition of the Lipschitz continuous gradient. Furthermore, a simpler but more efficient interpolation Wolfe line search method is also introduced. The numerical results demonstrate the feasibility of the new NCG method and the new line search method. However, although in theory our algorithm gains a greater range of applicability, this advantage does not manifest itself numerically. The reasons behind this are worth further investigation.

\section*{Acknowledgments}
The work was supported by NSFC project 11971430.

\bibliographystyle{unsrt}

\end{document}